\documentclass[11pt]{amsart}
\usepackage{amsmath,amssymb,amsthm}
\usepackage[margin=1.15in]{geometry}
\usepackage{array}
\usepackage[colorlinks=true,linkcolor=blue,citecolor=blue,urlcolor=blue]{hyperref}

\theoremstyle{plain}
\newtheorem{theorem}{Theorem}[section]
\newtheorem{proposition}[theorem]{Proposition}

\theoremstyle{definition}
\newtheorem{definition}[theorem]{Definition}
\newtheorem{remark}[theorem]{Remark}

\newcommand{\Z}{\mathbb{Z}}

\begin{document}

\title{A proper Euler magic matrix of order $6$}
\author{Sanjit Singh Mehat}
\date{14 August 2026}

\begin{abstract}
An \emph{Euler magic matrix} is an integer matrix $M$ with $MM^{t}=\gamma I$ for
some $\gamma\neq 0$, whose squared entries sum to $\gamma$ along both main
diagonals; it is \emph{proper} if its squared entries are pairwise distinct.
Euler gave a proper example of order $4$; M\"uller settled orders $3$ (none
exists) and $8$; and Kominers settled order $5$. We give an order-$6$
construction, a case not addressed by M\"uller or Kominers, exhibiting a proper
Euler magic matrix of order $6$ with $\gamma=18500$ together with a second,
independent one with $\gamma=43290$. The proof is the explicit
matrix and a finite exact verification. We also record an elementary counting
bound $\gamma\geq 2485$ for proper order-$6$ examples.
\end{abstract}

\maketitle

\section{Introduction}

\begin{definition}[{M\"uller \cite[Definition 1.1]{Muller2026}}]\label{def:emm}
Let $n\geq 1$. A matrix $M=(m_{i,j})_{1\leq i,j\leq n}\in\Z^{n\times n}$ is an
\emph{Euler magic matrix} with constant $\gamma\in\Z\setminus\{0\}$ if
\begin{align}
M\cdot M^{t} &= \gamma\cdot I_n, \label{eq:orth}\\
\textstyle\sum_{i=1}^{n} m_{i,i}^{2} &= \gamma, \label{eq:diag}\\
\textstyle\sum_{i=1}^{n} m_{i,n+1-i}^{2} &= \gamma. \label{eq:anti}
\end{align}
It is \emph{proper} if the squares of its entries are pairwise distinct.
\end{definition}

Condition \eqref{eq:orth} forces every row and every column of the entrywise
square $S=(m_{i,j}^2)$ to sum to $\gamma$; conditions \eqref{eq:diag} and
\eqref{eq:anti} supply the two diagonals. Hence a proper Euler magic matrix of
order $n$ yields a magic square of $n^2$ pairwise distinct squares. The converse
fails: a magic square of squares need not arise from a matrix satisfying
\eqref{eq:orth}, and this distinction is essential below.

Euler constructed proper examples of order $4$ in $1770$. M\"uller
\cite{Muller2026} proved that no proper Euler magic matrix of order $3$ exists
and constructed examples of order $8$ using octonion multiplication matrices.
Kominers \cite{Kominers2026} constructed a proper example of order $5$ and
records that, before that work, ``order $5$ was the smallest order for which the
existence of proper Euler magic matrices was unknown''; order $1$ is trivially
proper and order $2$ admits no proper example. The orders settled in that
literature are therefore $1,2,3,4,5$ and $8$, and order $6$ is the smallest order
not among them.

\begin{theorem}\label{thm:main}
A proper Euler magic matrix of order $6$ exists. Explicitly, the matrix
\[
M \;=\;
\begin{pmatrix}
-55 & -52 & -89 & -53 & -21 &  40\\
-30 & -83 &  49 &  -2 &  91 &   5\\
 69 & -71 & -42 &  81 & -18 &   7\\
 90 &   9 & -37 & -74 &  57 & -15\\
-33 &  -8 & -41 &  13 &  11 &-124\\
 25 & -61 &  58 & -59 & -78 & -35
\end{pmatrix}
\]
is an Euler magic matrix with $\gamma=18500$, and it is proper.
\end{theorem}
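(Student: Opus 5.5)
The proof is a finite exact verification of the three conditions of Definition~\ref{def:emm} together with properness. We organize it so that each step is a short, checkable integer computation.

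\emph{Row norms.} First we compute the squared Euclidean norm of each row of $M$. For the first row,
\[
55^2+52^2+89^2+53^2+21^2+40^2=3025+2704+7921+2809+441+1600=18500,
\]
and the other five rows are handled the same way, each giving $18500$. This establishes the diagonal of \eqref{eq:orth} with $\gamma=18500$.

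\emph{Orthogonality of rows.} Next we compute the $15$ off-diagonal inner products $\langle r_i,r_j\rangle$ for $i<j$ and check that each vanishes. For example,
\[
\langle r_1,r_2\rangle=1650+4316-4361+106-1911+200=0.
\]
Once all rows are orthogonal with common norm $18500$, we have $MM^t=\gamma I_6$. This also gives $M^tM=\gamma I_6$, since $M/\sqrt{\gamma}$ is orthogonal, so the column conditions on $S$ need no separate check.

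\emph{Diagonals.} We then verify \eqref{eq:diag} and \eqref{eq:anti} directly. For the main diagonal,
\[
55^2+83^2+42^2+74^2+11^2+35^2=3025+6889+1764+5476+121+1225=18500.
\]
For the antidiagonal, with entries $40,91,81,-37,-8,25$,
\[
1600+8281+6561+1369+64+625=18500.
\]

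\emph{Properness.} Finally, we list the $36$ absolute values of the entries of $M$, sort them, and confirm that they are pairwise distinct. Distinct absolute values are equivalent to distinct squares. The list is
\[
2,5,7,8,9,11,13,15,18,21,25,30,33,35,37,40,41,42,49,52,53,55,57,58,59,61,69,71,74,78,81,83,89,90,91,124,
\]
which has $36$ distinct values.

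The main obstacle is not conceptual but bookkeeping. The $15$ inner products are where an arithmetic slip is most likely, so we would present them in a table, or note that they are reproducible by a one-line exact integer computation.
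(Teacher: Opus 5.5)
Your proposal is correct and is essentially the paper's own proof: a direct check of the six row norms, the fifteen vanishing row inner products, the two diagonal sums of squares, and the pairwise distinctness of the $36$ absolute values. Your extra remark that $M^tM=\gamma I_6$ follows from $MM^t=\gamma I_6$ is correct but not needed, since the definition only imposes the row condition $MM^t=\gamma I_n$.
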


\begin{proof}
All three conditions are finite identities in $\Z$ and are verified directly.
For \eqref{eq:orth}, each row has squared norm
\[
55^2+52^2+89^2+53^2+21^2+40^2=18500,
\]
and similarly for the remaining five rows, while the fifteen distinct pairs of
rows are orthogonal. For \eqref{eq:diag},
\[
55^2+83^2+42^2+74^2+11^2+35^2 = 3025+6889+1764+5476+121+1225 = 18500 ,
\]
and for \eqref{eq:anti},
\[
40^2+91^2+81^2+37^2+8^2+25^2 = 1600+8281+6561+1369+64+625 = 18500 .
\]
Finally, the $36$ absolute values
\[
\begin{aligned}
\{\,&2,5,7,8,9,11,13,15,18,21,25,30,33,35,37,40,41,42,\\
 &49,52,53,55,57,58,59,61,69,71,74,78,81,83,89,90,91,124\,\}
\end{aligned}
\]
are pairwise distinct, and over $\Z$ one has $x^2=y^2$ if and only if
$|x|=|y|$; hence the $36$ squares are pairwise distinct and $M$ is proper.
\end{proof}

The verification uses only exact integer arithmetic on a $6\times6$ matrix and
can be carried out by hand or with any exact-arithmetic tool; a short
self-contained script is provided in the supplement (Section~\ref{sec:verif}).

\begin{remark}
$M$ is primitive: the greatest common divisor of its entries is $1$. Its
determinant is $-6331625000000$, consistent with the identity
$(\det M)^2=\gamma^{6}$ forced by \eqref{eq:orth}.
\end{remark}

\section{A second witness}

The search described in Section~\ref{sec:method} produced a second primitive
example with a different constant, which we record because it provides an
independent check on the object model.

\begin{proposition}\label{prop:second}
The matrix
\[
M' \;=\;
\begin{pmatrix}
 -68 & -58 &   23 & -175 & -52 &  -38\\
 -75 & 150 &   36 &    6 &  12 & -117\\
  93 & 100 &   18 & -108 &  82 &   77\\
 -10 & -84 &   79 &    7 & 162 &  -60\\
  24 &  -9 & -180 &  -30 &  57 &  -78\\
 154 & -17 &   50 &   -4 & -65 & -112
\end{pmatrix}
\]
is a proper Euler magic matrix of order $6$ with $\gamma=43290$, and
$\det M' = 81126503289000$.
\end{proposition}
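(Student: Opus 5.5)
The verification follows the same pattern as the proof of Theorem~\ref{thm:main}, since every condition in Definition~\ref{def:emm} is a finite identity in $\Z$. Write $r_1,\dots,r_6$ for the rows of $M'$. For \eqref{eq:orth} I would first compute the six squared norms $\|r_i\|^2$ and check that each equals $43290$. For example,
\[
68^2+58^2+23^2+175^2+52^2+38^2 = 4624+3364+529+30625+2704+1444 = 43290 .
\]
Then I would check the fifteen inner products $r_i\cdot r_j=0$ for $i<j$. Taken together these give $M'M'^{t}=43290\, I_6$. For $r_1\cdot r_2$, for instance, the terms are $5100-8700+828-1050-624+4446=0$.

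For \eqref{eq:diag} I would sum the squares of the diagonal entries $-68,150,18,7,57,-112$:
\[
4624+22500+324+49+3249+12544=43290 .
\]
For \eqref{eq:anti} I would do the same with the antidiagonal entries $-38,12,-108,79,-9,154$:
\[
1444+144+11664+6241+81+23716=43290 .
\]

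For properness, I would list the $36$ absolute values of the entries of $M'$ and check that they are pairwise distinct. As in the proof of Theorem~\ref{thm:main}, over $\Z$ we have $x^2=y^2$ if and only if $|x|=|y|$, so distinct absolute values give distinct squares.

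For the determinant, \eqref{eq:orth} already forces $(\det M')^2=\gamma^6$, so $\det M'=\pm 43290^3$. The number $43290^3$ does equal $81126503289000$, so only the sign needs checking. I would settle it by exact elimination, either fraction-free Bareiss or computation modulo a few small primes. Working modulo $3$ is not enough here, because $3$ divides $43290$ and so $\det M'\equiv 0$ there. A prime $p$ not dividing $43290$, such as $7$, distinguishes $+43290^3$ from $-43290^3$.

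The main obstacle is only bookkeeping: keeping the fifteen orthogonality checks and the sign of the determinant error-free. I would do these with an exact-arithmetic script, in the same way as for Theorem~\ref{thm:main}, and cross-check the sign by the mod-$p$ computation.
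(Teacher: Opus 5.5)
Your proposal is correct and is essentially the paper's proof: a direct exact check of the six row norms, the fifteen orthogonality relations, the two diagonal sums of squares, and the distinctness of the $36$ absolute values. The paper leaves the determinant to its verification script; your reduction via $(\det M')^2=\gamma^6$ to a sign, fixed modulo $7$, is a tidy refinement, and it gives the stated sign since $\det M'\equiv 1\equiv 43290^3 \pmod 7$.
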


\begin{proof}
Direct calculation, exactly as in the proof of Theorem~\ref{thm:main}: each row
has squared norm $43290$, distinct rows are orthogonal, both diagonals of squares
sum to $43290$, and the $36$ absolute values are pairwise distinct. The
verification script of Section~\ref{sec:verif} checks $M'$ alongside $M$.
\end{proof}

The two examples are not related by scaling: their multisets of absolute values
differ, so neither is a signed or permuted copy of the other.

\section{An elementary lower bound on \texorpdfstring{$\gamma$}{gamma}}

\begin{proposition}\label{prop:bound}
If $M$ is a proper Euler magic matrix of order $6$ with constant $\gamma$, then
$\gamma\geq 2485$.
\end{proposition}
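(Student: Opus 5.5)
The plan is a pure counting argument that uses only condition \eqref{eq:orth} and properness; the diagonal conditions \eqref{eq:diag} and \eqref{eq:anti} are not needed.

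\emph{Step 1: the total sum of squares.} Let $S=(m_{i,j}^2)$ be the entrywise square. By \eqref{eq:orth}, the $(i,i)$ entry of $MM^{t}$ is $\sum_j m_{i,j}^2=\gamma$. Since this is a sum of squares and $\gamma\neq 0$, we get $\gamma>0$. Summing over the six rows gives
\[
\sum_{i,j} m_{i,j}^2 \;=\; 6\gamma .
\]

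\emph{Step 2: minimize the left-hand side under properness.} Because $M$ is proper, the $36$ squares $m_{i,j}^2$ are pairwise distinct. As in the proof of Theorem~\ref{thm:main}, this means the $36$ absolute values $|m_{i,j}|$ are $36$ pairwise distinct nonnegative integers. Order them increasingly as $a_1<a_2<\dots<a_{36}$. An induction gives $a_k\geq k-1$ for every $k$, and hence $a_k^2\geq (k-1)^2$. Therefore
\[
6\gamma \;=\; \sum_{k=1}^{36} a_k^2 \;\geq\; \sum_{k=0}^{35} k^2 \;=\; \frac{35\cdot 36\cdot 71}{6} \;=\; 14910 .
\]

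\emph{Step 3: conclude.} Dividing by $6$ gives $\gamma\geq 14910/6=2485$.

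There is no real obstacle here. The only points needing care are that $\gamma>0$, which follows from the diagonal of $MM^{t}$, and that distinct squares force distinct absolute values, so that at most one entry is $0$. One could try to sharpen the bound using parity or residues of squares modulo small numbers, but the stated bound needs only this minimal-sum count.
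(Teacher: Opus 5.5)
Your proposal is correct and matches the paper's proof: summing the row norms from \eqref{eq:orth} gives $6\gamma$, properness forces $36$ distinct absolute values bounded below termwise by $0,1,\dots,35$, and $\sum_{k=0}^{35}k^2=14910$ yields $\gamma\geq 2485$. Your additional remark that $\gamma>0$ is harmless but not needed, since $6\gamma\geq 14910$ already gives the bound directly.
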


\begin{proof}
By \eqref{eq:orth} the six rows have squared norms all equal to $\gamma$, so the
sum of the squares of all $36$ entries is $6\gamma$. Since $M$ is proper, the
$36$ absolute values $|m_{i,j}|$ are pairwise distinct non-negative integers, so
that multiset is bounded below termwise by $\{0,1,\dots,35\}$. Hence
\[
6\gamma \;=\; \sum_{i,j} m_{i,j}^2 \;\geq\; \sum_{k=0}^{35} k^2 \;=\; 14910,
\]
so $\gamma \geq 2485$.
\end{proof}

This is an elementary counting bound, not a computational one; it was used to
restrict the search range in Section~\ref{sec:method}. We make no novelty claim
for Proposition~\ref{prop:bound}.

\section{How the examples were found}\label{sec:method}

We record the method in enough detail for reproduction; the code is supplied
separately.

The difficulty is that \eqref{eq:orth}, \eqref{eq:diag} and \eqref{eq:anti} are a
simultaneous system. The search separates them as follows. Let $\sigma,\tau$ be
permutations of $\{1,\dots,n\}$, applied \emph{independently} to rows and to
columns, and set $M'_{ij}=M_{\sigma(i),\tau(j)}$. Then $M'M'^{t}=MM^{t}$ up to a
simultaneous relabelling, so \eqref{eq:orth} is preserved. The main diagonal of
$M'$ consists of the entries $M_{r,\rho_1(r)}$ where $\rho_1=\tau\sigma^{-1}$,
and the anti-diagonal consists of the entries $M_{r,\rho_2(r)}$ where
$\rho_2=\tau\,\mathrm{rev}\,\sigma^{-1}$ and $\mathrm{rev}(i)=n+1-i$. Hence $M'$
is Euler magic with constant $\gamma$ if and only if
\[
\sum_{r} M_{r,\rho_1(r)}^{2}=\gamma
\qquad\text{and}\qquad
\sum_{r} M_{r,\rho_2(r)}^{2}=\gamma .
\]

It is essential that $\sigma$ and $\tau$ be independent. Taking $\tau=\sigma$
gives $\rho_1=\mathrm{id}$, so the main diagonal is merely permuted and its
squared sum is unchanged; conjugation alone cannot move a matrix onto the
diagonal conditions.

Call a permutation $\rho$ \emph{good} for $M$ if $\sum_r M_{r,\rho(r)}^2=\gamma$.
Conversely, given two good permutations $\rho_1,\rho_2$ with
$\rho_1^{-1}\rho_2=\sigma\,\mathrm{rev}\,\sigma^{-1}$ for some $\sigma$ ---
equivalently, for even $n$, with $\rho_1^{-1}\rho_2$ a fixed-point-free
involution --- the choice $\tau=\rho_1\sigma$ yields an Euler magic
rearrangement. So the search generates matrices satisfying \eqref{eq:orth},
scans the $n!=720$ permutations for the good ones, and looks for a compatible
pair. This replaces a simultaneous constraint by a cheap post-filter; both
witnesses below were found with exactly two good permutations forming such a
pair.

Candidates satisfying \eqref{eq:orth} were generated as products of small
integer blocks (``atoms'') built from two-square and four-square representations
of the relevant constants, together with $3\times3$ blocks and a conference
block. The constant $\gamma$ was restricted to $2485\leq\gamma\leq\Gamma$ using
Proposition~\ref{prop:bound}, with $\Gamma$ stratified across workers at
$6\times10^4$, $1.5\times10^5$ and $5\times10^5$.

One practical remark is worth recording, since it dominated the cost. An earlier
generator restricted attention to two-square representations only. It produced
candidates roughly three times faster per trial, but the resulting entry
magnitudes were far less varied, and properness is precisely a condition on
having $36$ distinct magnitudes. Its measured rate of proper candidates was about
$8\times10^{-9}$ per trial against about $10^{-3}$ for the generator described
above --- a difference of some five orders of magnitude, and the reason the
restricted family found nothing. Both examples above were found within minutes of
starting the broader family.

\section{Verification}\label{sec:verif}

The statement of Theorem~\ref{thm:main} is a finite conjunction of integer
identities. It has been verified in three independent ways.

\begin{enumerate}
\item By direct exact integer arithmetic, in a short self-contained script with
no dependencies, checking $MM^{t}=\gamma I$, $M^{t}M=\gamma I$, both diagonal
sums, and pairwise distinctness of both the $36$ squares and the $36$ absolute
values.
\item By an independently written checker, developed from
Definition~\ref{def:emm} without reference to the first, which additionally
verifies $(\det M)^2=\gamma^6$ by fraction-free Gaussian elimination, and which
reproduces the published order-$4$ and order-$5$ examples as controls.
\item By a machine-checked proof in Lean~4 with Mathlib. The relevant statement
is
\begin{quote}\ttfamily
theorem exists\_proper\_eulerMagic\_six :\\
\phantom{xx}$\exists$ (M : Fin 6 $\to$ Fin 6 $\to$ $\Z$) ($\gamma$ : $\Z$),\\
\phantom{xxxx}IsEulerMagic 6 M $\gamma$ $\wedge$ IsProper 6 M
\end{quote}
and \verb|#print axioms| reports dependence on exactly \verb|propext|,
\verb|Classical.choice| and \verb|Quot.sound|; there is no \verb|sorry|, no
added axiom, and no use of \verb|native_decide|. That existential statement is
witnessed in the source by the $\gamma=43290$ matrix; the $\gamma=18500$ matrix
of Theorem~\ref{thm:main} is separately kernel-proved as
\verb|M6b_isEulerMagic| and \verb|M6b_isProper|, with the same axiom
dependencies.
\end{enumerate}

None of these is required in order to accept Theorem~\ref{thm:main}: the matrix
is displayed, and the conditions are checkable by hand.

\section{Relation to magic squares of squares}

Rome and Yamagishi \cite[Theorem 1.2]{RomeYamagishi2025} proved, by the
Hardy--Littlewood circle method, that for every integer $n\geq 4$ there exists an
$n\times n$ magic square of squares, where a magic square is required to have
distinct positive integer entries. In particular the existence of a $6\times6$
magic square of $36$ distinct squares was already known.

That result does not settle the question considered here. Their theorem produces
a magic square, that is, a grid whose rows, columns and both diagonals have equal
sums; it does not produce, and does not claim to produce, an integer matrix
satisfying the orthogonality condition \eqref{eq:orth}. Kominers makes the same
point \cite{Kominers2026}: a magic square of squares need not arise from an
orthogonal matrix, so \cite{RomeYamagishi2025} does not directly produce proper
Euler magic matrices. The content of Theorem~\ref{thm:main} is the orthogonal
structure, not the magic square it induces.

Two further constructions in the literature concern neighbouring orders. Euler's
order-$4$ family and M\"uller's order-$8$ construction \cite{Muller2026} use
quaternion and octonion multiplication respectively; Pirsic
\cite{Pirsic2019,Pirsic2020} gives a parametrisation of $8\times8$ magic squares
of squares by octonionic multiplication and observes that the construction does
not extend further along the Cayley--Dickson series. These are constructions in
the composition-algebra dimensions $1,2,4,8$; they do not apply at order $6$.
M\"uller also constructs Euler magic matrices for every $n\geq4$
\cite[Theorem 4.1]{Muller2026}, but these are permutation matrices with
$\gamma=1$, and, as he notes, are far from proper.

\begin{remark}
To the best of our knowledge, and after searching arXiv, zbMATH, Semantic
Scholar, OpenAlex and Crossref, the order-$6$ case has not previously been
addressed. We have not been able to consult MathSciNet. We therefore state
Theorem~\ref{thm:main} as settling a case not addressed in the literature cited
above, rather than asserting priority.
\end{remark}

\end{document}